\documentclass{amsart}
\usepackage{inputenc, amsthm, amssymb, amsmath}

\newtheorem{theorem}{Theorem}
\newtheorem{definition}{Definition}
\newtheorem{proposition}{Proposition}
\newtheorem{corollary}{Corollary}

\DeclareMathOperator{\Norm}{Norm}

\newcommand {\frakp}{\mathfrak{p}}

\newcommand {\calD} {\mathcal {D}}
\newcommand {\calO} {\mathcal {O}}

\newcommand {\SL}{\mathrm{SL}}
\newcommand {\bbN} {\mathbb {N}}
\newcommand {\bbQ} {\mathbb {Q}}
\newcommand {\bbR} {\mathbb {R}}
\newcommand {\bbC} {\mathbb {C}}

\newcommand{\ord}{\mathop{\mathrm{ord}}}
\newcommand{\Res}{\mathop{\mathrm{Res}}}
\renewcommand {\Re}{\mathop{\mathrm{Re}}}
\renewcommand {\Im}{\mathop{\mathrm{Im}}}

\title{Asymptotic properties of Dedekind zeta functions in families of number fields}
\author{Alexey Zykin}
\address{
State University --- Higher School of Economics, 
117312, Vavilova st., 7, Moscow, Russia
\newline \indent
Laboratoire Poncelet (UMI 2615)
}
\email{alzykin@gmail.com}
\date{}

\begin{document}
\begin{abstract}
The main goal of this paper is to prove a formula that expresses the limit behaviour of Dedekind zeta functions for $\Re s > 1/2$ in families of number fields, assuming that the Generalized Riemann Hypothesis holds. This result can be viewed as a generalization of the Brauer--Siegel theorem. As an application we obtain a limit formula for Euler--Kronecker constants in families of number fields.
\end{abstract}
\maketitle 

\section{Introduction}
Our starting point is the classical Brauer--Siegel theorem for number fields first proven by Siegel in the case of quadratic fields and then by Brauer (see \cite{Bra}) in a more general situation. This theorem states that if $K$ runs through a sequence of number fields normal over $\mathbb{Q}$ such that $n_K/\log|D_K|\to 0,$ then $\log (h_K R_K)/\log \sqrt{|D_K|}\to 1.$ Here $D_K,$ $h_K,$ $R_K$ and $n_K$ are respectively the discriminant, the class number, the regulator and the degree of the field $K.$

In \cite{Tsfa} this theorem was generalized by Tsfasman and Vl\u{a}du\c{t} to the case when the condition $n_K/\log|D_K|\to 0$ no longer holds. To formulate this result we will need to introduce some notation. 

For a finite extension $K/\bbQ,$ let $\Phi_q(K)$ be the number of prime ideals of the ring of integers $\calO_K$ with norm $q$, i.e. $\Phi_q(K)=|\{\frakp\mid \Norm \frakp =q\}|.$ Furthermore, denote by $\Phi_{\bbR}(K)$ and $\Phi_{\bbC}(K)$ the number of real and complex places of $K$ respectively. Let $g_K=\log\sqrt{|D_K|}$ be the genus of the field $K$ (in analogy with the function field case). An extension $K/\bbQ$ is called almost normal if there exists a tower of extensions $K=K_n\supseteq K_{n-1} \dots \supseteq K_1 \supseteq K_0=\bbQ$ such that $K_i/K_{i-1}$ is normal for all $i.$ 

Consider a family of pairwise non-isomorphic number fields $\{K_i\}.$

\begin{definition}
If the limits $\phi_{\alpha}=\lim\limits_{i\to\infty}\frac{\Phi_{\alpha}(K_i)}{g_{K_i}}, \alpha\in \{\bbR, \bbC, 2, 3, 4, 5, 7, 9, \dots \}$ exist for each $\alpha$ then the family $\{K_i\}$ is called asymptotically exact. It is asymptotically good if there exists $\phi_\alpha\neq 0$ and asymptotically bad otherwise. The numbers $\phi_{\alpha}$ are called the Tsfasman--Vl\u{a}du\c{t} invariants of the family $\{K_i\}.$ 
\end{definition}

It is not difficult to check (see \cite[Lemma 2.7]{Tsfa}) that the condition $n_K/\log|D_K|\to 0$ from the Brauer--Siegel theorem is equivalent to the fact that the corresponding family is asymptotically bad. One can prove that any family contains an asymptotically exact subfamily and that an infinite tower of number fields is always asymptotically exact (see \cite[Lemma 2.2 and Lemma 2.4]{Tsfa}).

Now we can formulate the Tsfasman--Vl\u{a}du\c{t} theorem proven in \cite[Theorem 7.3]{Tsfa} in the asymptotically good case and in \cite[Theorem 1]{Zy} in the asymptotically bad one.

\begin{theorem}
\label{assgood}
For an asymptotically exact family $\{K_i\}$ we have
\begin{equation}
\label{TVZ}
\lim\limits_{i\to\infty}\frac{\log (h_{K_i}R_{K_i})}{g_{K_i}}=1+\sum_{q}\phi_q \log \frac{q}{q-1} - \phi_\mathbb{R} \log 2 - \phi_\mathbb{C}\log 2\pi,
\end{equation}
provided either all $K_i$ are almost normal over $\bbQ$ or the Generalized Riemann Hypothesis (GRH) holds for zeta functions of the fields $K_i$.
\end{theorem}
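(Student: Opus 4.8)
The plan is to pass from $h_K R_K$ to the residue of the Dedekind zeta function at $s=1$, and then to read off that residue from the Euler product, carefully separating the contribution of the prime ideals from that of the nontrivial zeros.

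First I would invoke the analytic class number formula, writing $\kappa_K=\Res_{s=1}\zeta_K(s)$, so that
\[ \kappa_K=\frac{2^{\Phi_{\bbR}(K)}(2\pi)^{\Phi_{\bbC}(K)}h_K R_K}{w_K\sqrt{|D_K|}}. \]
Taking logarithms and dividing by $g_K=\log\sqrt{|D_K|}$ gives
\[ \frac{\log(h_K R_K)}{g_K}=1+\frac{\log\kappa_K}{g_K}+\frac{\log w_K}{g_K}-\frac{\Phi_{\bbR}(K)}{g_K}\log 2-\frac{\Phi_{\bbC}(K)}{g_K}\log 2\pi. \]
Along the family $g_K\to\infty$ by Minkowski's discriminant bound, which also yields $n_K=O(g_K)$ and hence $\log w_K=O(\log g_K)=o(g_K)$; the two archimedean ratios converge to $\phi_{\bbR}$ and $\phi_{\bbC}$ by the definition of an asymptotically exact family. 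Everything therefore reduces to establishing
\[ \lim_{i\to\infty}\frac{\log\kappa_{K_i}}{g_{K_i}}=\sum_q\phi_q\log\frac{q}{q-1}. \]

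To attack this, for $\sigma=1+\epsilon>1$ I would expand the Euler product as $\log\zeta_K(\sigma)=\sum_q\Phi_q(K)\bigl(-\log(1-q^{-\sigma})\bigr)$ and compare it with the Laurent expansion at the simple pole, $\log\zeta_K(1+\epsilon)=\log\kappa_K-\log\epsilon+\gamma_K\epsilon+O(\epsilon^2)$, where $\gamma_K$ is the Euler--Kronecker constant. Choosing $\epsilon=\epsilon_i\to 0$ with $|\log\epsilon_i|=o(g_{K_i})$, this rearranges to
\[ \frac{\log\kappa_K}{g_K}=\sum_q\frac{\Phi_q(K)}{g_K}\bigl(-\log(1-q^{-1-\epsilon})\bigr)+o(1)+\frac{\gamma_K\epsilon}{g_K}. \]
The finitely many terms with $q\le Q$ converge to $\sum_{q\le Q}\phi_q\log\frac{q}{q-1}$ as $i\to\infty$, since $\Phi_q(K_i)/g_{K_i}\to\phi_q$ and $\epsilon_i\to0$. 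To let $Q\to\infty$ I would use the basic inequality of Tsfasman--Vl\u{a}du\c{t}, a consequence of the Weil explicit formula, which controls the sums $\sum_q\phi_q\frac{\log q}{q^{1/2}-1}$; in its finite form it bounds the tail $\sum_{q>Q}\Phi_q(K)q^{-1}/g_K$ uniformly in $i$, guaranteeing both convergence of the target series and the legitimacy of the interchange of limits.

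The inequality $\limsup_i\frac{\log\kappa_{K_i}}{g_{K_i}}\le\sum_q\phi_q\log\frac{q}{q-1}$ is then essentially unconditional: writing $-\frac{\zeta_K'}{\zeta_K}(\sigma)=\frac{1}{\sigma-1}-\sum_\rho\frac{1}{\sigma-\rho}+(\text{archimedean terms})$, every summand has $\Re\frac{1}{\sigma-\rho}>0$ for $\sigma>1$, so dropping the zero sum bounds the prime contribution from above. The matching lower bound is the heart of the matter, and it is here that the hypotheses are needed: one must bound $\sum_\rho\frac{1}{\sigma-\rho}$ from above and show $\gamma_K=O(g_K)$, i.e. rule out a zero of $\zeta_K$ too close to $s=1$. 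Under GRH every zero has $\Re\rho=1/2$, so the zero sum is $O(g_K)$ and its effect disappears after division by $g_K$. In the almost normal case I would instead factor $\zeta_K$ through the tower by Brauer induction and the Aramata--Brauer theorem into a product of Hecke $L$-functions; the sole possible exceptional zero then comes from a quadratic character, and Stark's effective version of Siegel's theorem places it at a distance from $1$ whose logarithm is $o(g_K)$. The principal obstacle is precisely this last point --- controlling the exceptional (Siegel) zero uniformly over the whole family, so that its contribution to $\log\kappa_{K_i}/g_{K_i}$ vanishes in the limit; once both bounds are in hand, letting $Q\to\infty$ completes the proof.
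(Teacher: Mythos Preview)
The paper does not give its own proof of this theorem: it is quoted as a known result, proven in \cite[Theorem 7.3]{Tsfa} for the asymptotically good case and in \cite[Theorem 1]{Zy} in general. So there is no in-paper argument to compare against; one can only measure your sketch against those references.

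Your overall strategy matches theirs: the reduction via the analytic class number formula to $\lim_i \log\kappa_{K_i}/g_{K_i}$ is exactly how the paper itself rephrases the problem in the paragraph following Definition~2, and the split into an unconditional upper bound plus a lower bound requiring either GRH or Stark's effective control of the exceptional zero through an almost-normal tower is precisely the architecture of \cite{Tsfa} and \cite{Zy}. Your identification of the Tsfasman--Vl\u{a}du\c{t} basic inequality as the device that makes the tail $\sum_{q>Q}$ uniformly small, and of Stark's theorem as the substitute for GRH in the almost normal case, is correct.

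There is one genuine soft spot. When you write
\[
\log\zeta_K(1+\epsilon)=\log\kappa_K-\log\epsilon+\gamma_K\epsilon+O(\epsilon^2),
\]
the implied constant in $O(\epsilon^2)$ depends on $K$ (it involves the higher Laurent coefficients of $\zeta_K$ at $s=1$), and you give no reason why it stays under control as $i\to\infty$ while $\epsilon_i\to 0$. The cited proofs avoid this trap: rather than Taylor-expanding around $s=1$, they integrate the explicit formula for $\zeta_K'/\zeta_K$ over a short real segment to the right of $1$, so that the only $K$-dependent quantities that appear are the zero sum and the archimedean terms, both of which are $O(g_K)$ outright. Your argument can be repaired in the same way --- drop the Laurent expansion and instead bound $\log\big((\sigma-1)\zeta_K(\sigma)\big)-\log\kappa_K=\int_{1}^{\sigma}\big(\tfrac{\zeta_K'}{\zeta_K}(t)+\tfrac{1}{t-1}\big)\,dt$ directly via the explicit formula --- but as written the step does not close.
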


To generalize this theorem still further we will have to use the concept of limit zeta functions from \cite{Tsfa}.

\begin{definition}
The limit zeta function of an asymptotically exact family of number fields $\{K_i\}$ is defined as
$$\zeta_{\{K_i\}}(s)=\prod\limits_{q}(1-q^{-s})^{-\phi_q}.$$
\end{definition}

Theorem C from \cite{Tsfa} gives us the convergence of the above infinite product for $\Re s\geq 1$. Let $\varkappa_K=\Res\limits_{s=1} \zeta_K(s)$ be the residue of the Dedekind zeta function of the field $K$ at $s=1.$ Using the residue formula (see \cite[Chapter VIII, Theorem 5]{Lan})
$$
\varkappa_K=\frac{2^{\Phi_{\bbR}(K)}(2\pi)^{\Phi_{\bbC}(K)}h_K R_K}{w_K\sqrt{|D_K|}}
$$
(here $w_K$ is the number of roots of unity in $K$) and the estimate $w_K =O(n_K^2)$ (see \cite[p. 322]{Lan}) one can see that the question about the behaviour of the ratio from the Brauer--Siegel theorem is  immediately reduced to the corresponding question for $\varkappa_K.$ 

The formula (\ref{TVZ}) can be rewritten as $\lim\limits_{i\to\infty}\frac{\log \varkappa_{K_i}}{g_{K_i}}=\log\zeta_{\{K_i\}}(1).$ Furthermore, Tsfasman and Vl\u{a}du\c{t} prove in \cite[Proposition 4.2]{Tsfa} that for $\Re s>1$ the equality $\lim\limits_{i\to\infty}\frac{\log \zeta_{K_i}(s)}{g_{K_i}}=\log\zeta_{\{K_i\}}(s)$ holds.

Our main goal is to investigate the question of the validity of the above equality for $\Re s<1.$ We work in the number field case, for the function field case see \cite{Zynew}, where the same problem was treated in a much broader context.

The case $s=1$ is in a sense equivalent to the Brauer--Siegel theorem so current techniques does not allow to treat it in full generality without the assumption of GRH. From now on we will assume that GRH holds for Dedekind zeta functions of the fields under consideration. Assuming GRH, Tsfasman and Vl\u{a}du\c{t} proved (\cite[Corollary from Theorem A]{Tsfa}) that the infinite product for $\zeta_{\{K_i\}}(s)$ is absolutely convergent for $\Re s \geq \frac{1}{2}.$ We can now formulate our main results.

\begin{theorem}
\label{main1}
Assuming GRH, for an asymptotically exact family of number fields $\{K_i\}$ for $\Re s > \frac{1}{2}$ we have 
$$\lim\limits_{i\to\infty}\frac{\log ((s-1)\zeta_{K_i}(s))}{g_{K_i}}=\log\zeta_{\{K_i\}}(s).$$
The convergence is uniform on compact subsets of the half-plane $\{s \mid \Re s > \frac{1}{2}\}.$
\end{theorem}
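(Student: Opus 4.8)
The plan is to deduce the statement on $\Re s > 1/2$ from the already-established case $\Re s > 1$ by a normal-families argument. For $\Re s > 1$ the identity $\lim_i g_{K_i}^{-1}\log\zeta_{K_i}(s) = \log\zeta_{\{K_i\}}(s)$ is \cite[Proposition 4.2]{Tsfa}, and since $g_{K_i}^{-1}\log(s-1)\to 0$ the same holds with $(s-1)\zeta_{K_i}(s)$ in place of $\zeta_{K_i}(s)$. Under GRH the function $(s-1)\zeta_{K_i}(s)$ is holomorphic and nonvanishing on the simply connected region $\Re s > 1/2$ (all nontrivial zeros lie on $\Re s = 1/2$, the trivial ones on $\Re s \leq 0$, and the pole at $s=1$ is cancelled), so a single-valued holomorphic branch of $\log((s-1)\zeta_{K_i}(s))$ is fixed by requiring it to be real for real $s>1$. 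Set $f_i(s) = g_{K_i}^{-1}\log((s-1)\zeta_{K_i}(s))$. By the Vitali--Porter theorem it suffices to show that $\{f_i\}$ is uniformly bounded on compact subsets of $\Re s > 1/2$: convergence on the subregion $\Re s > 1$, which has accumulation points, then forces locally uniform convergence on all of $\Re s > 1/2$, and the limit, agreeing with $\log\zeta_{\{K_i\}}(s)$ on $\Re s > 1$, must equal it throughout. Here I use that the product defining $\zeta_{\{K_i\}}$ converges absolutely with no vanishing factors for $\Re s > 1/2$ (the Corollary to Theorem A of \cite{Tsfa}), so its logarithm is holomorphic there.

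The heart of the matter is the bound $|f_i(s)| = O(1)$, i.e. $|\log((s-1)\zeta_{K_i}(s))| = O(g_{K_i})$ locally uniformly, which I would obtain from the logarithmic derivative. The standard partial-fraction estimate for Dedekind zeta functions gives, for $s$ in a bounded part of the strip,
$$\frac{\zeta_{K_i}'}{\zeta_{K_i}}(s) + \frac{1}{s-1} = \sum_{|\Im(s-\rho)|\leq 1}\frac{1}{s-\rho} + O\big(\log|D_{K_i}| + n_{K_i}\log(|\Im s|+2)\big),$$
the sum being over nontrivial zeros $\rho$ within distance $1$ of the height of $s$, whose number is itself $O(\log|D_{K_i}| + n_{K_i}\log(|\Im s|+2))$ by the Riemann--von Mangoldt formula. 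Here GRH enters decisively: every $\rho$ has $\Re\rho = 1/2$, so for $\Re s = \sigma > 1/2$ one has $|s-\rho|^{-1}\leq (\sigma-1/2)^{-1}$. Since $\log|D_{K_i}| = 2g_{K_i}$ and, for an asymptotically exact family, $n_{K_i} = \Phi_{\bbR}(K_i) + 2\Phi_{\bbC}(K_i) = O(g_{K_i})$, this yields
$$\left|\frac{\zeta_{K_i}'}{\zeta_{K_i}}(s) + \frac{1}{s-1}\right| = O\!\left(\frac{g_{K_i}\log(|\Im s|+2)}{\sigma - 1/2}\right),$$
which is $O(g_{K_i})$ uniformly on each compact subset of $\Re s > 1/2$.

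To pass from the derivative to $f_i$ I would integrate. The function $\frac{d}{ds}\log((s-1)\zeta_{K_i}(s)) = \frac{1}{s-1} + \frac{\zeta_{K_i}'}{\zeta_{K_i}}(s)$ is holomorphic on $\Re s > 1/2$ and, by the previous step, bounded by $O(g_{K_i})$ on compacts. For $s$ in a compact set I would take the base point $s_0 = 1+\delta + i\,\Im s$ at the same height, so that the Euler product gives directly $|\log((s_0-1)\zeta_{K_i}(s_0))| \leq |\log|s_0-1|| + n_{K_i}\log\zeta_{\bbQ}(1+\delta) = O(g_{K_i})$, and integrate the derivative along the short horizontal segment from $s_0$ to $s$, which stays in $\Re s > 1/2$ and has bounded length. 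This gives $|\log((s-1)\zeta_{K_i}(s))| = O(g_{K_i})$ locally uniformly, as required.

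The main obstacle is the local uniform bound of the second paragraph, and in particular the clean use of GRH to control the contribution of the zeros near $s$; the remaining points (fixing the branch, the integration, and the appeal to Vitali--Porter) are routine once that estimate is in hand. One should also check that the implied constants depend only on the compact set and not on $i$, which holds because $g_{K_i}\to\infty$ (finitely many fields have bounded discriminant) while $n_{K_i}/g_{K_i}$ stays bounded along the family.
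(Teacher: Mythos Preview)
Your proposal is correct and follows essentially the same approach as the paper: use the partial-fraction estimate for $\zeta_K'/\zeta_K$ together with the Riemann--von Mangoldt zero count, invoke GRH to bound the local contribution of zeros for $\Re s>1/2$, integrate to get $\log((s-1)\zeta_{K_i}(s))=O(g_{K_i})$ on compacts, and then apply Vitali's theorem using the known convergence for $\Re s>1$. The only cosmetic differences are that the paper absorbs the zeros with $\epsilon\leq|s-\rho|<1$ into the error term so that the remaining sum is empty under GRH (rather than bounding each term by $(\sigma-1/2)^{-1}$ as you do), and it cites Minkowski's bound $n_K\ll g_K$ directly instead of deducing it from asymptotic exactness.
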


The result for $s = \frac{1}{2}$ is considerably weaker and we can only prove the following upper bound:
\begin{theorem}
\label{main2}
Let $\rho_{K_i}$ be the first non-zero coefficient in the Taylor series expansion of $\zeta_{K_i}(s)$ at $s=\frac{1}{2},$ i.~e. $$\zeta_{K_i}(s)=\rho_{K_i}\left(s-\frac{1}{2}\right)^{r_{K_i}}+o\left(\left(s-\frac{1}{2}\right)^{r_{K_i}}\right).$$
Then, assuming GRH, for any asymptotically exact family of number fields $\{K_i\}$ the following inequality holds:
\begin{equation}
\label{ineq}
\limsup\limits_{i\to\infty}\frac{\log |\rho_{K_i}|}{g_{K_i}}\leq\log\zeta_{\{K_i\}}\left(\frac{1}{2}\right).
\end{equation}
\end{theorem}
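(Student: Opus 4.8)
The plan is to recover $\log|\rho_{K_i}|$ from the behaviour of $\zeta_{K_i}$ just to the right of $\tfrac12$, where Theorem \ref{main1} applies, and to pay the price of a term coming from the central zero. Under GRH every non-trivial zero of $\zeta_{K_i}$ is $\rho=\tfrac12+i\gamma$ with $\gamma\in\bbR$, and $r_{K_i}$ of them sit at $\gamma=0$; for real $\sigma\in(\tfrac12,1)$ the leading coefficient is captured by $\log|\zeta_{K_i}(\sigma)|$ together with a term $-r_{K_i}\log(\sigma-\tfrac12)$, which is positive and whose size is a priori unclear. The whole difficulty is concentrated in controlling the vanishing order $r_{K_i}$; once this is done everything else follows from Theorem \ref{main1} by letting $\sigma\to\tfrac12^{+}$.

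The first step is the auxiliary fact that $r_{K_i}/g_{K_i}\to 0$. Differentiating the uniform convergence of Theorem \ref{main1} gives $\tfrac{1}{g_{K_i}}\tfrac{\zeta_{K_i}'}{\zeta_{K_i}}(\sigma)\to\tfrac{\zeta_{\{K_i\}}'}{\zeta_{\{K_i\}}}(\sigma)$ for $\sigma>\tfrac12$. Inserting the Hadamard factorisation of $\zeta_{K_i}$ (equivalently, the explicit formula) and discarding the constant by the classical symmetry identity relating it to $\sum_\rho\Re\tfrac1\rho$, the normalised zero sum $\tfrac{1}{g_{K_i}}\sum_{\rho}\tfrac{\sigma-1/2}{(\sigma-1/2)^2+\gamma^2}$ converges to a finite limit $\Theta(\sigma)$ expressible through the Tsfasman--Vl\u{a}du\c{t} invariants. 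The central zeros contribute exactly $\tfrac{r_{K_i}}{g_{K_i}}\cdot\tfrac{1}{\sigma-1/2}$ to this sum, and all remaining terms are non-negative precisely because GRH puts every $\rho$ on $\Re s=\tfrac12$. Hence $\limsup_i r_{K_i}/g_{K_i}\le(\sigma-\tfrac12)\Theta(\sigma)$ for every $\sigma>\tfrac12$, and since $(\sigma-\tfrac12)\Theta(\sigma)\to0$ as $\sigma\to\tfrac12^{+}$, we conclude $r_{K_i}/g_{K_i}\to0$.

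With this in hand I would carry out the main estimate. Writing $\Gamma_{\bbR}(s)=\pi^{-s/2}\Gamma(s/2)$ and $\Gamma_{\bbC}(s)=(2\pi)^{-s}\Gamma(s)$ for the archimedean factors, integrating the Hadamard expansion of $\tfrac{\zeta_{K_i}'}{\zeta_{K_i}}$ from $\tfrac12$ to $\sigma$ and isolating the central zeros gives
$$\log|\rho_{K_i}|=\log|\zeta_{K_i}(\sigma)|-r_{K_i}\log\Big(\sigma-\tfrac12\Big)-\int_{1/2}^{\sigma}\sum_{\rho\neq1/2}\frac{t-\tfrac12}{(t-\tfrac12)^2+\gamma^2}\,dt+\Big(\sigma-\tfrac12\Big)g_{K_i}+A_{K_i}(\sigma)+O(1),$$
where $A_{K_i}(\sigma)=\Phi_{\bbR}(K_i)\log\frac{\Gamma_{\bbR}(\sigma)}{\Gamma_{\bbR}(1/2)}+\Phi_{\bbC}(K_i)\log\frac{\Gamma_{\bbC}(\sigma)}{\Gamma_{\bbC}(1/2)}$ and $O(1)$ is uniform in $i$. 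The integral over $\rho\neq\tfrac12$ is non-negative by GRH, so it may be dropped for an upper bound. Dividing by $g_{K_i}$ and letting $i\to\infty$, Theorem \ref{main1} gives $\tfrac{1}{g_{K_i}}\log|\zeta_{K_i}(\sigma)|\to\log\zeta_{\{K_i\}}(\sigma)$, the auxiliary fact kills the $r_{K_i}$-term, and $\tfrac{\Phi_{\bbR}(K_i)}{g_{K_i}}\to\phi_{\bbR}$, $\tfrac{\Phi_{\bbC}(K_i)}{g_{K_i}}\to\phi_{\bbC}$, yielding for every fixed $\sigma\in(\tfrac12,1)$
$$\limsup_{i\to\infty}\frac{\log|\rho_{K_i}|}{g_{K_i}}\le\log\zeta_{\{K_i\}}(\sigma)+\Big(\sigma-\tfrac12\Big)+\phi_{\bbR}\log\frac{\Gamma_{\bbR}(\sigma)}{\Gamma_{\bbR}(1/2)}+\phi_{\bbC}\log\frac{\Gamma_{\bbC}(\sigma)}{\Gamma_{\bbC}(1/2)}.$$
Finally I would let $\sigma\to\tfrac12^{+}$: the absolute convergence of the product for $\zeta_{\{K_i\}}$ on $\Re s\ge\tfrac12$ makes the right-hand side continuous there and every correction term tends to $0$, leaving exactly $\log\zeta_{\{K_i\}}(\tfrac12)$, which is (\ref{ineq}).

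The main obstacle is the auxiliary estimate $r_{K_i}=o(g_{K_i})$: without it the positive term $-r_{K_i}\log(\sigma-\tfrac12)$ blocks the passage $\sigma\to\tfrac12$, and it is exactly here that GRH is indispensable, through the non-negativity of $\Re\tfrac{1}{\sigma-\rho}$ for zeros on the critical line. The remaining points — justifying term-by-term differentiation of the limit in Theorem \ref{main1}, the uniformity in $i$ of the $O(1)$, and the interchange of the limits in $i$ and in $\sigma$ — are routine but should be checked.
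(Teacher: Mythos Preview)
Your proof is correct and rests on the same three ingredients as the paper's: Theorem~\ref{main1} for the behaviour just to the right of $\tfrac12$, the Hadamard/explicit formula, and the non-negativity under GRH of the contribution of each non-central zero to $\Re\tfrac{1}{s-\rho}$ for real $s>\tfrac12$. The packaging, however, differs in two respects worth noting.

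First, the paper runs a diagonal argument: it constructs a single sequence $\theta_k\to 0$ depending on $k$ so that all three pieces of $\tfrac{1}{g_k}\log\zeta_{K_k}(\tfrac12+\theta_k)$ behave well simultaneously. You instead fix $\sigma$, let $i\to\infty$, and only afterwards send $\sigma\to\tfrac12^{+}$. Since the left-hand side of your penultimate displayed inequality does not depend on $\sigma$, no genuine interchange of limits is required --- your closing caveat on this point is overly cautious. Second, and relatedly, the paper's diagonal scheme needs the sharper bound $r_K\ll g_K/\log\log g_K$, which it imports from \cite{IK}; your iterated-limit route needs only $r_{K_i}=o(g_{K_i})$, and you extract this directly from Theorem~\ref{main1} together with the explicit formula and positivity. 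This makes your argument slightly more self-contained, at the cost of an extra paragraph.

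One small point to tighten: your claim that $(\sigma-\tfrac12)\Theta(\sigma)\to 0$ as $\sigma\to\tfrac12^{+}$ requires that $\Theta(\sigma)$ remain bounded there, which amounts to the convergence of $\sum_q\phi_q\,\tfrac{\log q}{\sqrt{q}-1}$. This is precisely the GRH basic inequality of \cite{Tsfa}, so it holds, but it deserves an explicit reference.
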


The question whether the equality holds in theorem \ref{main2} is rather delicate. It is related to the so called low-lying zeroes of zeta functions, that is the zeroes of $\zeta_K(s)$ having small imaginary part compared to $g_K.$ We doubt that the equality $\lim\limits_{i\to\infty}\frac{\log |\rho_{K_i}|}{g_{K_i}}=\log\zeta_{\{K_i\}}(\frac{1}{2})$ holds for any asymptotically exact family $\{K_i\}$ since the behaviour of low-lying zeroes is known to be rather random. Nevertheless, it might hold for "most" families (whatever it might mean). A more thorough discussion of this question in a slightly different situation (low-lying zeroes of $L$-functions of modular forms on $\SL_2(\bbR)$) can be found in \cite{Iwaniec}. 

To illustrate how hard the question may be, let us quote the following result by Iwaniec and Sarnak, which is the object of the paper \cite{IS}. They manage to prove that there exists a sequence $\{d_i\}$ in $\bbN$ of density at least $\frac{1}{2}$  such that
$$\lim\limits_{i\to\infty}\frac{\log \zeta_{\bbQ(\sqrt{d_i})}(\frac{1}{2})}{\log d_i}=\log\zeta_{\{\bbQ(\sqrt{d_i})\}}\left(\frac{1}{2}\right)=0.$$
The techniques of the evaluation of mollified moments of Dirichlet $L$-functions used in that paper are rather involved. We also note that, to our knowledge, there has been no investigation of low-lying zeroes of $L$-functions of growing degree. It seems that the analogous problem in the function field has neither been very well studied.

Let us formulate a corollary of the theorem \ref{main1}. We will need the following definition:

\begin{definition}
The Euler--Kronecker constant of a number field $K$ is defined as $\gamma_K=\frac{c_{0}(K)}{c_{-1}(K)},$ where $\zeta_K(s)=c_{-1}(K)(s-1)^{-1}+c_0(K)+O(s-1).$ 
\end{definition}

Ihara made an extensive study of the Euler-Kronecker constant in \cite{Ih}. In particular, he obtained an asymptotic formula for the behaviour of $\gamma$ in families of curves over finite fields. As a corollary of theorem \ref{main1}, we prove the following analogue of Ihara's result in the number field case:

\begin{corollary}
\label{EK}
Assuming GRH, for any asymptotically exact family of number fields $\{K_i\}$ we have $$\lim\limits_{i\to\infty}\frac{\gamma_{K_i}}{g_{K_i}}=-\sum\limits_q\phi_q \frac{\log q}{q-1}.$$
\end{corollary}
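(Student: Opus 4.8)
The plan is to recognize the Euler--Kronecker constant as a logarithmic derivative at $s=1$, and then to differentiate the limit formula of Theorem \ref{main1} term by term, using the fact that locally uniform convergence of holomorphic functions implies locally uniform convergence of their derivatives.

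First I would unwind the Laurent expansion. Since $\zeta_K(s) = c_{-1}(K)(s-1)^{-1} + c_0(K) + O(s-1)$, multiplying by $s-1$ gives $(s-1)\zeta_K(s) = c_{-1}(K) + c_0(K)(s-1) + O((s-1)^2)$, a function holomorphic and non-vanishing at $s=1$, since its value there is $\varkappa_K = c_{-1}(K) > 0$. Hence $\log((s-1)\zeta_K(s))$ is holomorphic in a neighbourhood of $s=1$, and
$$\left.\frac{d}{ds}\log((s-1)\zeta_K(s))\right|_{s=1} = \frac{c_0(K)}{c_{-1}(K)} = \gamma_K.$$
This identifies $g_{K_i}^{-1}\gamma_{K_i}$ as the derivative at $s=1$ of the function $f_i(s) = g_{K_i}^{-1}\log((s-1)\zeta_{K_i}(s))$ appearing in Theorem \ref{main1}.

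Next I would invoke that theorem directly. Under GRH the factor $(s-1)\zeta_{K_i}(s)$ has no zeros in $\Re s > \frac12$, so each $f_i$ is holomorphic there, and by Theorem \ref{main1} the $f_i$ converge to $\log\zeta_{\{K_i\}}(s)$ uniformly on compact subsets of that half-plane. By the standard consequence of Cauchy's integral formula (the Weierstrass theorem on locally uniform limits of holomorphic functions), the derivatives also converge locally uniformly; evaluating at the interior point $s=1$ gives
$$\lim_{i\to\infty}\frac{\gamma_{K_i}}{g_{K_i}} = \lim_{i\to\infty} f_i'(1) = \left.\frac{d}{ds}\log\zeta_{\{K_i\}}(s)\right|_{s=1}.$$

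Finally, it remains to compute this derivative. From $\log\zeta_{\{K_i\}}(s) = -\sum_q \phi_q \log(1-q^{-s})$ one differentiates term by term to obtain $-\sum_q \phi_q \frac{q^{-s}\log q}{1-q^{-s}}$, and setting $s=1$ yields $-\sum_q \phi_q \frac{\log q}{q-1}$, which is exactly the asserted limit. Since Theorem \ref{main1} already supplies the delicate locally uniform convergence, this corollary is essentially formal; the only points requiring care are the verification that $s=1$ lies in the region where the limit holds (it does, as $1 > \frac12$) and that the term-by-term differentiation of the product defining $\zeta_{\{K_i\}}(s)$ is legitimate, which follows from the absolute and locally uniform convergence of that product for $\Re s \geq \frac12$ established earlier.
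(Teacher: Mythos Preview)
Your proposal is correct and follows exactly the same approach as the paper: differentiate both sides of the equality in Theorem~\ref{main1} at $s=1$, using that the locally uniform convergence passes to derivatives. The paper's proof is only two sentences to this effect; you have simply written out the details (the identification of $\gamma_K$ with the logarithmic derivative of $(s-1)\zeta_K(s)$ at $s=1$, and the explicit computation of $(\log\zeta_{\{K_i\}})'(1)$) that the paper leaves implicit.
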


This result was formulated in \cite{Tsfa1} without the assumption of the Riemann hypothesis. Unfortunately, the proof given there is flawed. It uses an unjustified change of limits in the summation over prime powers and the limit taken over the family $\{K_i\}.$ Thus, the question about the validity of this equality without the assumption of GRH is still open. It would be interesting to have a result of this type at least under a certain normality condition on our family $\{K_i\}.$ Even the study of abelian extensions is not uninteresting in this setting.

\section{Proofs of the main results}

\begin{proof}[Proof of theorem \ref{main1}]

The statement of the theorem is known for $\Re s > 1$ (see \cite[Proposition 4.2]{Tsfa}) thus we can freely assume that $\Re s < 2.$

We will use the following well known result \cite[Proposition 5.7]{IK} which can be proven using Hadamard's factorization theorem.
\begin{proposition}
\begin{enumerate}
\item For $-\frac{1}{2}\leq \sigma \leq 2, s = \sigma+i t$ we have
$$\frac{\zeta_K'(s)}{\zeta_K(s)}+\frac{1}{s}+\frac{1}{s-1}-\sum_{|s-\rho|<1} \frac{1}{s-\rho} = O(g_K),$$
where $\rho$ runs through all non-trivial zeroes of $\zeta_K(s)$ and the constant in $O$ is absolute.
\item The number $m(T, K)$ of zeroes $\rho=\beta+\gamma i$ of $\zeta_K(s)$ such that $|\gamma-T|\leq 1$ satisfies $m(T, K)< C (g_K+n_K\log(|T|+4))$ with an absolute constant $C.$
\end{enumerate}
\end{proposition}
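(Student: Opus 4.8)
The plan is to deduce both parts simultaneously from the Hadamard factorization of the completed Dedekind zeta function, the crucial intermediate step being a bound on the density of zeroes at a given height. First I would introduce the completed zeta function
$$\Lambda_K(s) = |D_K|^{s/2}\left(\pi^{-s/2}\Gamma(s/2)\right)^{\Phi_{\bbR}(K)}\left((2\pi)^{-s}\Gamma(s)\right)^{\Phi_{\bbC}(K)}\zeta_K(s),$$
and recall that $\xi_K(s) = s(s-1)\Lambda_K(s)$ is entire of order $1$, satisfies $\xi_K(s)=\xi_K(1-s)$, and has as its zeroes exactly the non-trivial zeroes $\rho$ of $\zeta_K(s)$. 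Hadamard's factorization theorem then gives $\xi_K(s) = e^{A_K+B_K s}\prod_\rho\left(1-\frac{s}{\rho}\right)e^{s/\rho}$ for suitable constants $A_K,B_K$. Taking the logarithmic derivative and isolating $\zeta_K'/\zeta_K$ yields an identity of the shape
$$\frac{\zeta_K'(s)}{\zeta_K(s)}+\frac1s+\frac{1}{s-1} = B_K - g_K - H_K(s) + \sum_\rho\left(\frac{1}{s-\rho}+\frac{1}{\rho}\right),$$
where $g_K=\frac12\log|D_K|$ and $H_K(s)$ collects the logarithmic derivatives of the archimedean factors; by Stirling's formula $H_K(s) = O(n_K\log(|t|+2))$ uniformly for $-\frac12\le\sigma\le2$.

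Next I would extract the crucial density estimate. Evaluating the identity at the anchor point $s_0 = 2+it$, taking real parts, and using that $\Re\frac{1}{s_0-\rho} = \frac{2-\beta}{(2-\beta)^2+(t-\gamma)^2}$ is positive and $\gg (1+(t-\gamma)^2)^{-1}$ since $0\le\beta\le1$, together with the relation $\Re B_K = -\sum_\rho\Re\frac1\rho$ coming from the symmetry $\rho\mapsto 1-\bar\rho$ of the zero set and the trivial bound $\frac{\zeta_K'}{\zeta_K}(2+it)=O(n_K)$ in the region of absolute convergence, I would conclude
$$\sum_\rho\frac{1}{1+(t-\gamma)^2} = O\bigl(g_K+n_K\log(|t|+2)\bigr).$$
Part (2) is then immediate, since each zero with $|\gamma-T|\le1$ contributes at least $\frac12$ to the left-hand side at $t=T$, giving $m(T,K) \ll g_K+n_K\log(|T|+4)$. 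Here Minkowski's discriminant bound $\log|D_K|\gg n_K$, that is $n_K=O(g_K)$, shows the two terms are of comparable size.

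Finally, for part (1) I would run the standard subtraction trick: writing the identity at both $s$ and $s_0=2+it$ and subtracting eliminates the unknown constant $B_K$ along with the $\frac1\rho$ terms, producing
$$\frac{\zeta_K'(s)}{\zeta_K(s)} = \frac{\zeta_K'(s_0)}{\zeta_K(s_0)} + \bigl(H_K(s_0)-H_K(s)\bigr) + \sum_\rho\left(\frac{1}{s-\rho}-\frac{1}{s_0-\rho}\right).$$
The first two terms are $O(n_K\log(|t|+2))$. In the zero sum I would retain the terms with $|s-\rho|<1$ on the left-hand side, which is precisely the subtracted sum appearing in the statement, and estimate the remainder: for $|s-\rho|\ge1$ one has $\bigl|\frac{1}{s-\rho}-\frac{1}{s_0-\rho}\bigr|\ll\frac{1}{|s_0-\rho|^2}\ll\frac{1}{1+(t-\gamma)^2}$, while the $\frac{1}{s_0-\rho}$ attached to the finitely many nearby zeroes are each $O(1)$ and bounded in number by the counting estimate; summation via the density bound gives $O(g_K+n_K\log(|t|+2))$. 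On a fixed horizontal strip and using $n_K=O(g_K)$, all error terms collapse to $O(g_K)$, which is the asserted bound.

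The main obstacle is that the sum over zeroes is only conditionally convergent and the constant $B_K$ is not directly accessible, so the whole argument hinges on exploiting the functional-equation symmetry to pin down $\Re B_K$ and on subtracting at $s_0=2+it$ to manufacture an absolutely convergent sum controlled by the analytic conductor. For this reason the density estimate of the middle paragraph is genuinely the heart of the matter, and parts (1) and (2) are naturally proved in tandem rather than in isolation.
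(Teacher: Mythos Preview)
The paper does not actually prove this proposition: it is quoted verbatim as a well-known result from Iwaniec--Kowalski \cite[Proposition~5.7]{IK}, with only the remark that it ``can be proven using Hadamard's factorization theorem.'' Your proposal supplies exactly that standard proof---Hadamard product for $\xi_K$, real-part positivity at the anchor $s_0=2+it$ to get the zero-density bound, then the subtraction $s\mapsto s_0$ to control the tail of the zero sum---and it is correct.

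One small point worth noting: you are in fact more careful than the paper's formulation. The statement as written claims an absolute $O(g_K)$ uniformly in $t$, whereas the honest bound coming out of your argument (and out of \cite{IK}) is $O\bigl(g_K + n_K\log(|t|+2)\bigr)$. You correctly flag this by writing ``on a fixed horizontal strip \dots\ all error terms collapse to $O(g_K)$,'' which is precisely how the proposition is used downstream in the proof of Theorem~\ref{main1} (the domain $\calD_{T,\epsilon}$ has $|t|\le T$ fixed). So your version is the accurate one, and the paper's phrasing is a harmless imprecision given the intended application.
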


Now, applying this proposition, we see that for fixed $T > 0, \epsilon > 0$ and any $s\in \calD_{T,\epsilon}=\{s\in \bbC\mid |\Im s| \leq T, \epsilon + \frac{1}{2} \leq \Re s \leq 2 \}$ we have
\begin{equation}
\label{eq1}
\frac{\zeta_K'(s)}{\zeta_K(s)} + \frac{1}{s-1} = \sum_{|s - \rho| < \epsilon} \frac{1}{s-\rho} +O_{T, \epsilon}(g_K),
\end{equation}
for by Minkowski's theorem \cite[Chapter V, Theorem 4]{Lan} $n_K < C g_K$ with an absolute constant $C.$

If we assume GRH, the sum over zeroes on the right hand side of (\ref{eq1}) disappears. Integrating, we finally get that in $\calD_{T,\epsilon}$ 
$$\frac{\log (\zeta_K(s)(s-1))}{g_K}=O_{T, \epsilon}(1)$$

Now, we can use the so called Vitali theorem \cite[5.21]{Tit}:
\begin{proposition}
Let $f_n(s)$ be a sequence of functions holomorphic in a domain $\calD.$ Assume that for some $M\in \bbR$ we have $|f_n(s)|<M$ for any $n$ and $s\in\calD.$ Let also $f_n(s)$ tend to a limit at a set of points having a limit point in $\calD.$ Then the sequence $f_n(s)$ tends to a holomorphic function in $\calD$ uniformly on any closed disk contained in $\calD.$
\end{proposition}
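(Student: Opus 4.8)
The plan is to deduce this form of Vitali's theorem from two classical pillars of complex function theory: Montel's theorem on normal families and the identity theorem. The uniform bound $|f_n(s)| < M$ on all of $\calD$ makes the family $\{f_n\}$ uniformly bounded on every compact subset, so by Montel's theorem it is a normal family: every subsequence of $\{f_n\}$ admits a further subsequence that converges uniformly on each closed disk contained in $\calD$ to some holomorphic limit.

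First I would fix the set $E \subseteq \calD$ on which $\{f_n\}$ is assumed to converge, together with one of its limit points $s_0 \in \calD$. Given two subsequences $\{f_{n_k}\}$ and $\{f_{m_k}\}$ converging locally uniformly to holomorphic functions $g$ and $h$ respectively, both $g$ and $h$ must agree with the pointwise limit $\lim_n f_n(s)$ at every point of $E$. Hence $g - h$ is holomorphic on $\calD$ and vanishes on $E$, a set possessing a limit point $s_0$ interior to the domain; since $\calD$ is connected, the identity theorem forces $g \equiv h$ throughout $\calD$. Thus all subsequential limits coincide; denote the common value by $f$, a holomorphic function on $\calD$.

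It then remains to upgrade convergence along subsequences to convergence of the entire sequence. I would argue by contradiction: if $\{f_n\}$ failed to converge to $f$ uniformly on some closed disk $\overline{B} \subset \calD$, there would exist $\varepsilon > 0$ and a subsequence $\{f_{n_k}\}$ with $\sup_{\overline{B}} |f_{n_k} - f| \geq \varepsilon$ for all $k$. By normality this subsequence would itself have a further subsequence converging locally uniformly to a holomorphic limit, which by the previous paragraph must equal $f$, contradicting the lower bound. This yields uniform convergence to $f$ on every closed disk contained in $\calD$, as claimed.

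The substantive input is Montel's theorem; once normality is available, the remainder is the identity theorem together with a routine subsequence-extraction argument. The hypothesis that must be used with care is the connectedness of $\calD$ (it being a domain): the identity theorem requires it, and without connectedness the conclusion may fail on components disjoint from the accumulation set $E$. I expect the only genuine obstacle, should one insist on a fully self-contained treatment, to be establishing normality from first principles, namely deriving equicontinuity on compacts via the Cauchy integral formula and then applying the Arzel\`a--Ascoli theorem; since Montel's theorem is entirely standard, I would simply invoke it.
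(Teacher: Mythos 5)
Your proof is correct. Note, however, that the paper does not prove this proposition at all: it is quoted verbatim as Vitali's theorem with a citation to Titchmarsh \cite{Tit}, so there is no in-paper argument to compare against. Your route --- Montel's theorem for normality, the identity theorem (using that $\calD$ is connected and the accumulation point of the convergence set lies \emph{inside} $\calD$) to show all subsequential limits coincide, and a subsequence-extraction contradiction to upgrade to convergence of the full sequence --- is the standard modern proof, and every step is sound; you were right to flag connectedness as the hypothesis doing real work in the identity-theorem step. For what it is worth, Titchmarsh's own proof in \cite{Tit} is genuinely different: it avoids normal families and Arzel\`a--Ascoli altogether, instead using the uniform bound to control Taylor coefficients at the limit point directly (establishing convergence in a disk about that point) and then propagating convergence to all of $\calD$ by a chain-of-disks continuation argument together with compactness. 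The Montel route is shorter if one grants the standard machinery; Titchmarsh's is more self-contained, which answers the reservation in your final paragraph --- the ``fully self-contained treatment'' need not pass through equicontinuity at all.
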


It suffices to notice that the convergence of $\log \zeta_{K_i}(s)/ g_{K_i}$ to $\zeta_{\{K_i\}}(s)$ is known for $\Re s > 1$ by \cite[proposition 4.2]{Tsfa}. So, applying the above theorem and using the fact that under GRH $\zeta_{\{K_i\}}(s)$ is holomorphic for $\Re s \geq \frac{1}{2}$ \cite[corollary from theorem A]{Tsfa} we get the required result.
\end{proof}

\begin{proof}[Proof of theorem \ref{main2}]
Denote $g_k=g_{K_k}.$ Let us write 
$$\zeta_{K_k}(s)=c_k \left(s-\frac{1}{2}\right)^{r_k} F_k(s),$$
where $F_k(s)$ is an analytic function in the neighbourhood of $s=\frac{1}{2}$ such that $F_k\left(\frac{1}{2}\right)=1.$
Let us put $s=\frac{1}{2}+\theta,$ where $\theta>0$ is a small positive real number. We have
\begin{equation*}
\frac{\log \zeta_{K_k}(\frac{1}{2}+\theta)}{g_k}=\frac{\log c_k}{g_k}+r_k\frac{\log \theta}{g_k}+\frac{\log F_k(\frac{1}{2}+\theta)}{g_k}.
\end{equation*}
To prove the theorem we will construct a sequence $\theta_k$ such that
\begin{enumerate}
\item $\frac{1}{g_k}\log \zeta_{K_k}\left(\frac{1}{2}+\theta_k\right) \to \log \zeta_{\{K_k\}}\left(\frac{1}{2}\right);$
\item $\frac{r_k}{g_k}\,\log \theta_k \to 0;$
\item $\liminf \frac{1}{g_k}\log F_k\left(\frac{1}{2}+\theta_k\right)\geq 0.$
\end{enumerate}

For each natural number $N$ we choose $\theta(N)$ a decreasing sequence such that 
\begin{equation*}
\left|\zeta_{\{K_k\}}\left(\frac{1}{2}\right)-\zeta_{\{K_k\}}\left(\frac{1}{2}+\theta(N)\right)\right| < \frac{1}{2N}.
\end{equation*}
This is possible since $\zeta_{\{K_k\}}(s)$ is continuous for $\Re s \geq \frac{1}{2}$ by \cite[corollary from theorem A]{Tsfa}. Next, we choose a sequence $k'(N)$ with the property:
\begin{equation*}
\left|\frac{1}{g_{k}}\log \zeta_{K_k}\left(\frac{1}{2}+\theta \right) - \log \zeta_{\{K_k\}}\left(\frac{1}{2}+\theta\right)\right|< \frac{1}{2N}
\end{equation*}
for any $\theta\in[\theta(N+1), \theta(N)]$ and any $k\geq k'(N).$ This is possible by theorem \ref{main1}. Then we choose $k''(N)$ such that 
\begin{equation*}
\frac{-r_k\log \theta(N+1)}{g_k}\leq \frac{\theta(N)}{N}
\end{equation*}
for any $k\geq k''(N),$ which can be done thanks to the following proposition (c.f. \cite[Proposition 5.34]{IK}):
\begin{proposition}
Assume that GRH holds for $\zeta_K(s).$ Then
$$\ord_{s=\frac{1}{2}}\zeta_K(s)< \frac{C \log 3|D_K|}{\log\log 3|D_K|}$$
the constant $C$ being absolute.
\end{proposition}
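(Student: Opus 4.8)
The plan is to bound $r=\ord_{s=\frac12}\zeta_K(s)$ by feeding a well-chosen non-negative test function into the Weil explicit formula and then optimising over a free parameter. The only non-obvious ingredient is where the factor $\log\log$ comes from: a crude first-moment count of the zeroes in a fixed neighbourhood of $s=\frac12$ gives merely $r=O(g_K)$, so the saving must be extracted from the contribution of the \emph{pole} of $\zeta_K(s)$ at $s=1$, whose size grows exponentially in the length of the support of the test function.

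Concretely, I would fix a parameter $L>0$ and take the (admissible, entire, even) Fej\'er kernel $h(r)=\left(\frac{\sin(Lr/2)}{Lr/2}\right)^2$, so that $h(r)\geq 0$ for real $r$, $h(0)=1$, $\int_{-\infty}^{\infty}h(r)\,dr=\frac{2\pi}{L}$, and its Fourier transform $g$ is non-negative and supported in $[-L,L]$. Writing the non-trivial zeroes as $\rho=\frac12+i\gamma$, the explicit formula for $\zeta_K$ takes the shape
$$\sum_{\rho}h(\gamma)=h\!\left(\frac{i}{2}\right)+h\!\left(-\frac{i}{2}\right)+\frac{1}{2\pi}\int_{-\infty}^{\infty}h(r)\,\Omega_K(r)\,dr-2\sum_{\frakp,\,m\geq 1}\frac{\log\Norm\frakp}{(\Norm\frakp)^{m/2}}\,g(m\log\Norm\frakp),$$
where the two evaluations at $\pm\frac{i}{2}$ arise from the pole at $s=1$ (and the point $s=0$), and $\Omega_K(r)=\log|D_K|+O\bigl(n_K\log(|r|+2)\bigr)$ gathers the discriminant and the archimedean $\Gamma$-factor terms.

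The next step is to estimate the four pieces under GRH. Since every $\gamma$ is then real and $h\geq 0$ on $\bbR$, while the central zero alone contributes $r\,h(0)=r$, I obtain $r\leq\sum_{\rho}h(\gamma)$. On the right the prime sum is non-negative (because $g\geq 0$) and may simply be discarded. The archimedean integral is $\frac{\log|D_K|}{2\pi}\int h+O(n_K/L)=\frac{2g_K}{L}+O(g_K/L)$, using $\int h=\frac{2\pi}{L}$, the concentration of $h$ near the origin, and Minkowski's bound $n_K<Cg_K$. Finally, because $h$ has exponential type $L$, the Paley--Wiener estimate gives $|h(\pm\frac{i}{2})|\leq e^{L/2}\,\|g\|_{1}/(2\pi)=O(e^{L/2})$. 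Collecting everything yields
$$r\leq\sum_{\rho}h(\gamma)\leq\frac{C_1 g_K}{L}+C_2\,e^{L/2}$$
with absolute constants $C_1,C_2$.

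It then remains to choose $L$. Taking $L=\log\log(3|D_K|)$ makes $e^{L/2}=\sqrt{\log(3|D_K|)}=o\!\left(\frac{\log|D_K|}{\log\log|D_K|}\right)$, so the first term dominates and gives $r\ll\frac{g_K}{\log\log|D_K|}\ll\frac{\log|D_K|}{\log\log|D_K|}$, which is the claim (the shift to $3|D_K|$ only keeps the iterated logarithm positive for small discriminants). The step I expect to demand the most care is the bookkeeping of the pole and archimedean terms: the entire $\log\log$ gain rests on the fact that the pole contribution scales like $e^{L/2}$ in the support length $L$, so that widening $L$ to sharpen the main term $g_K/L$ is profitable only up to $L\asymp\log\log|D_K|$. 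It is exactly here that GRH is used, through the positivity $h(\gamma)\geq 0$ that lets one drop all the non-central zeroes when bounding $r$ from above.
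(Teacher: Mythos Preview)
The paper does not supply its own proof of this proposition: it is simply quoted, with a reference to Proposition~5.34 of Iwaniec--Kowalski, and then used to control $r_k/g_k$. So there is nothing in the paper to compare your argument against.

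That said, your sketch is correct and is essentially the standard argument one finds in the cited reference. You plug a non-negative, compactly-Fourier-supported test function into the explicit formula for $\zeta_K$; GRH makes every zero contribution $h(\gamma)$ non-negative, so the central zero alone already gives $r\leq\sum_\rho h(\gamma)$; the prime sum enters with a minus sign and non-negative summands, hence may be dropped for an upper bound; the archimedean/discriminant block contributes $\asymp g_K/L$; and the pole terms are $O(e^{L/2})$ by Paley--Wiener. Balancing $g_K/L$ against $e^{L/2}$ with $L\asymp\log\log(3|D_K|)$ then yields the stated bound. Your identification of where GRH is genuinely used (the positivity step that discards the off-center zeroes) and of why the $\log\log$ saving is tied to the exponential growth of the pole term in $L$ is exactly right.
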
 

Finally, we choose an increasing sequence $k(N)$ such that $k(N)\geq \max(k'(N), k''(N))$ for any $N.$

Now, if we define $N=N(k)$ by the inequality $k(N)\leq k \leq k(N+1)$ and let $\theta_k=\theta(N(k)),$ then from the conditions imposed on $\theta_k$ we automatically get (1) and (2). The delicate point is (3). We will use Hadamard's product formula \cite[p. 137]{Sta}:
\begin{multline*}
\log{\vert D_K\vert}=\Phi_{\bbR}(K)(\log\pi-\psi(s/2))+2\Phi_{\bbC}(K)(\log(2\pi)-\psi(s))-\frac{2}{s}-\frac{2}{s-1}\\
+2{\sum_{\rho}}'\frac{1}{s-\rho}-2\,\frac{\zeta'_K(s)}{\zeta_K(s)},
\end{multline*}
where $\psi(s)=\Gamma'(s)/\Gamma(s)$ is the logarithmic derivative of the gamma function. In the first sum $\rho$ runs over the zeroes of $\zeta_K(s)$ in the critical strip and $\sum'$ means that $\rho$ and $\bar{\rho}$ are to be grouped together. This can be rewritten as
\begin{multline*}
\frac{1}{g_k}\left(\log \zeta_k\left(\frac{1}{2}+\theta\right)-r_k\log \theta\right)'=-1 +\frac{\Phi_{\bbR}(K_k)}{2g_k}\left(\log\pi-\psi\left(\frac{1}{4}+\frac{\theta}{2}\right)\right)+\\ 
+\frac{\Phi_{\bbC}(K_k)}{g_k}\left(\log 2\pi-\psi\left(\frac{1}{2}+\theta\right)\right) + \frac{8\theta}{(4\theta^2-1)g_k}+{\sum_{\rho\neq 1/2}}'\frac{1}{(1/2+\theta-\rho)g_k}.
\end{multline*}
One notices that all the terms on the right hand side except for $-1$ and $\frac{16\theta}{(4\theta^2-1)g_k}$ are positive. Thus, we see that $\frac{1}{g_k}\left(\log F_k\left(\frac{1}{2}+\theta\right)\right)'\geq C$ for any small enough $\theta,$ where $C$ is an absolute constant. From this and from the fact that $F_k\left(\frac{1}{2}\right)=1$ we deduce that
$$\frac{1}{g_k}\log F_k\left(\frac{1}{2}+\theta_k\right)\geq C\theta_k\to 0.$$ 
This proves (3) as well as the theorem.
\end{proof}

\begin{proof}[Proof of the corollary \ref{EK}]
It suffices to take the values at $s=1$ of the derivatives of both sides of the equality in theorem \ref{main1}. This is possible since the convergence is uniform for $\Re s > \frac{1}{2}.$
\end{proof}

\begin{flushleft}
{\bf Acknowledgements.} I would like to thank my advisor Michael Tsfasman for many fruitful discussions and constant attention to my work. I would also like to thank Michel Balazard for giving valuable advices and sharing his ideas with me.
\end{flushleft}

\end{document}